\newcommand{\mathsym}[1]{{}}
\newcommand{\thmref}[1]{Theorem~\ref{#1}}
\newcommand{\lemref}[1]{Lemma~\ref{#1}}
\newcommand{\eqnref}[1]{Equation~(\ref{#1})}
\newcommand{\remref}[1]{Remark~\ref{#1}}
\newcommand{\figref}[1]{Figure~\ref{#1}}
\def\li{L_{i}}
\def\ri{R_{i}}
\def\RR{{\mathbb R}}
\newtheorem{theorem}{Theorem}[section]
\newtheorem{lemma}[theorem]{Lemma}
\theoremstyle{example}
\newtheorem{remark}[theorem]{Remark}
\theoremstyle{definition}
\theoremstyle{notation}
\newcommand{\dd}[1]{\delta_{#1}}
\newcommand{\hj}[3]{\hat{j}_{#1}(#2,#3)}
\newcommand{\ga}{\Gamma}
\newcommand{\tg}{\tau(\Gamma)}
\newcommand{\ee}[1]{E(#1)}
\newcommand{\vv}[1]{V(#1)}
\newcommand{\va}{\upsilon}
\newcommand{\pp}{p_{i}}
\newcommand{\qq}{q_{i}}
\def\can{{\mathop{\rm can}}}
\def\BDV{{\mathop{\rm BDV}}}
\def\<{\langle }
\def\>{\rangle }
\newcommand{\secref}[1]{\S\ref{#1}}
\def\cd{c_{\mu_{D}}}
\def\elg{\ell (\ga)}
\def\diag{\text{diag}}
\newcommand{\am}{\mathrm{A}}
\newcommand{\dm}{\mathrm{D}}
\newcommand{\lm}{\mathrm{L}}
\newcommand{\plm}{\mathrm{L^+}}
\newcommand{\zm}{\mathrm{Z}}
\newcommand{\rmm}{\mathrm{R}}
\begin{document}

\title[Computation of Arakelov-Green Functions]
{Explicit Computation of Certain Arakelov-Green Functions}


\author{Zubeyir Cinkir}
\address{Zubeyir Cinkir\\
Zirve University\\
Department of Mathematics Education\\
Gaziantep\\
TURKEY.}
\email{zubeyirc@gmail.com}

\keywords{Metrized graph, Arakelov-Green function, resistance function}

\begin{abstract}
Arakelov-Green functions defined on metrized graphs have important role in relating arithmetical problems on algebraic curves into graph theoretical problems. In this paper, we clarify the combinatorial interpretation of certain Arakelov-Green functions by using electric circuit theory. The formulas we gave clearly show that such functions are piece-wisely defined, and each piece is a linear or quadratic function on each pair of edges of metrized graphs. These formulas lead to an algorithm for explicit computation of Arakelov-Green functions.
\end{abstract}

\maketitle

\section{Introduction}\label{section introduction}

Algebraic geometers have powerful tools due to intersection theory over complex numbers to study curves and varieties in general.
Observing the success of algebraic geometers, it is the desire of number theorist and arithmetic geometers to utilize the intersection theory for studying arithmetic properties of algebraic curves. However, if one works over fields other than complex numbers, many difficulties arise for various nice properties of complex numbers are no longer in use. Additional new tools should be used to overcome these difficulties.
This is what S. Arakelov did over archimedean fields in his studies which we know as Arakelov theory by now \cite{A}. Arakelov introduced an intersection pairing on arithmetic surfaces. The key part was to consider the contribution to the intersection number that comes from the infinite places. This contribution is defined by using Arakelov-Green function for the Riemann surfaces associated to the arithmetic surfaces. He used analysis and studied Laplace operator on those associated Riemann surfaces to derive global results on arithmetic surfaces. We note that the use of admissible metrised line bundles, metrized line bundles satisfying certain analytic criteria, on arithmetic surfaces is another important tool considered in Arakelov theory. G. Faltings' arithmetic analogues of Riemann-Roch theorem and adjunction formula from classical intersection theory on surfaces are two striking examples for the successes of Arakelov theory. These kinds of successes enabled Faltings to prove Mordell conjecture \cite{F} among other results in arithmetic geometry.

We have a similar story for non-archimedean fields. In this case, we have metrized graphs as non-archimedean analogues of Riemann surfaces. Again we have Arakelov-Green functions and Laplacian operators on metrized graphs. Reduction graphs, the dual graphs associated to the special fibre curve, are examples of metrized graphs. R. Rumely, who introduced metrized graphs to study arithmetic properties of algebraic curves and developed capacity theory \cite{RumelyBook}, contributed to the development of local intersection theory for algebraic curves defined over non-archimedean fields. Metrized graphs were further developed by T. Chinburg and Rumely in \cite{CR} and by S. Zhang in \cite{Zh1}. Rumely and T. Chinburg introduced "capacity pairing" and used metrized graphs in their work \cite{CR}. Later, S.
Zhang introduced another intersection pairing as a non-archimedean analogue of Arakelov's pairing on a Riemann surface, and he showed that analogous Riemann-Roch theorem and adjunction formula hold for this admissible pairing \cite{Zh1}. In \cite{BRh}, M. Baker and R. Rumely used harmanic analysis on metrized graphs to study Arakelov-Green functions and related continuous Laplacian operators. Various arithmetic results are obtained after these studies. For example, the proof of Effective Bogomolov Conjecture over function fields of characteristic zero \cite{C7}, \cite{Zh2}.

Metrized graphs and Arakelov-Green functions on metrized graphs have important roles in the articles  \cite{BRh}, \cite{CR}, \cite{C7}, \cite{RumelyBook}, \cite{Zh1} and \cite{Zh2}.
%
%
%
The basic interest about Arakelov-Green functions is to find their values on any given points of metrized graphs. 
Our aim in this article is to address this issue by finding an efficient algorithm that can be used for both symbolic and numerical
computations of Arakelov-Green functions.

In \secref{sec review}, we give a short description of metrized graphs and their discrete Laplacian matrix.
In \secref{sec resistance function}, we first review basic facts about the resistance function $r(x,y)$ on a metrized graph $\ga$.
Then we obtain formulas that express $r(x,y)$ in terms of the end points of the edges that contain $x$ and $y$ (see \lemref{lem resistance one edge}, \lemref{lem resistance px} and \thmref{thm resistance pxy}). This means that one needs
basically the effective resistance values between any two vertices in $\ga$ to obtain the values of $r(x,y)$.

In \secref{sec arakelov green function}, we first describe Arakelov-Green function on a metrized graph $\ga$. 
Baker and Rumely showed that Arakelov-Green function $g_{\mu_{can}}(x,y)$ can be expressed in terms of the tau constant $\tg$ of the metrized graph $\ga$ and the resistance function $r(x,y)$ (see \thmref{thm arakelov-green canonical}). Combining this fact and our results from \secref{sec resistance function} about resistance function, we obtain our main result in \thmref{thm main1}. In this way, we show that $g_{\mu_{can}}(x,y)$ on $\ga$ is a piece-wisely defined quadratic or linear function in both $x$ and $y$ by explicitly giving the coefficients of each piece in terms of the effective resistance values between the related vertices of $\ga$. If $x$ (or $y$) belongs to an edge whose removal disconnects $\ga$, $g_{\mu_{can}}(x,y)$ is linear in $x$ (or $y$). Otherwise it will be quadratic. We suggest that a matrix $\zm$ of size $e \times e$ can be used to describe $g_{\mu_{can}}(x,y)$, where $e$ is the number of edges in $\ga$.

In \secref{sec computations}, we give several examples of computations of $g_{\mu_{can}}(x,y)$ by finding the matrix $\zm$ which we call the value matrix. We know that the tau constant can be computed symbolically and numerically by using either theoretical work in various cases (\cite{C4} and \cite{C5})  or computer algorithms in all cases \cite{C3}. Therefore, we conclude the same things for computation of $g_{\mu_{can}}(x,y)$ by both using the results of \secref{sec arakelov green function} and our previous results on the tau constant.

\section{Metrized Graphs}\label{sec review}

In this section, we give a brief review of metrized graphs and their discrete Laplacian matrix.

A metrized graph $\ga$ is a finite connected graph equipped with a distinguished parametrization of each of its edges.
A metrized graph $\ga$ can have multiple edges and self-loops.
For any given $p \in \ga$,
the number $\va(p)$ of directions emanating from $p$ will be called the \textit{valence} of $p$.
By definition, there can be only finitely many $p \in \ga$ with $\va(p)\not=2$.

For a metrized graph $\ga$, we will denote a vertex set for $\ga$ by $\vv{\ga}$.
We require that $\vv{\ga}$ be finite and non-empty and that $p \in \vv{\ga}$ for each $p \in \ga$ if $\va(p)\not=2$. For a given metrized graph $\ga$, it is possible to enlarge the
vertex set $\vv{\ga}$ by considering additional valence $2$ points as vertices.

For a given metrized graph $\ga$ with vertex set $\vv{\ga}$, the set of edges of $\ga$ is the set of closed line segments with end points in $\vv{\ga}$. We will denote the set of edges of $\ga$ by $\ee{\ga}$. However, if
$e_i$ is an edge, by $\ga-e_i$ we mean the graph obtained by deleting the {\em interior} of $e_i$.


We denote the length of an edge $e_i \in \ee{\ga}$ by $\li$, which represents a positive real number. The total length of $\ga$, which is denoted by $\elg$, is given by $\elg=\sum_{i=1}^e\li$.

To have a well-defined discrete Laplacian matrix $\lm$ for a metrized
graph $\ga$, we first choose a vertex set $\vv{\ga}$ for $\ga$ in
such a way that there are no self-loops, and no multiple edges
connecting any two vertices. This can be done by
enlarging the vertex set by considering additional valence two points as vertices
whenever needed. We call such a vertex set $\vv{\ga}$ \textit{adequate}.
If distinct vertices $p$ and $q$ are the end points of an edge, we
call them \textit{adjacent} vertices.

Let $\ga$ be a metrized graph with $e$ edges and an adequate vertex set
$\vv{\ga}$ containing $v$ vertices. Fix an ordering of the vertices
in $\vv{\ga}$. Let $\{L_1, L_2, \cdots, L_e\}$ be a labeling of the
edge lengths. The matrix $\am=(a_{pq})_{v \times v}$ given by
\[
a_{pq}=\begin{cases} 0, & \quad \text{if $p = q$, or $p$ and $q$ are
not adjacent}.\\
\frac{1}{L_k}, & \quad \text{if $p \not= q$, and an edge of length $L_k$ connects $p$ and $q$.}\\
\end{cases}
\]
is called the \textit{adjacency matrix} of $\ga$. Let $\dm=\diag(d_{pp})$ be
the $v \times v$ diagonal matrix given by $d_{pp}=\sum_{s \in
\vv{\ga}}a_{ps}$. Then $\lm:=\dm-\am$ is called the \textit{discrete
Laplacian matrix} of $\ga$. That is, $ \lm =(l_{pq})_{v \times v}$ where
\[
l_{pq}=\begin{cases} 0, & \; \, \text{if $p \not= q$, and $p$ and $q$
are not adjacent}.\\
-\frac{1}{L_k}, & \; \, \text{if $p \not= q$, and $p$ and $q$ are
connected by} \text{ an edge of length $L_k$}\\
-\sum_{s \in \vv{\ga}-\{p\}}l_{ps}, & \; \, \text{if $p=q$}
\end{cases}.
\]

One can find more information about $\lm$ in \cite[Section 3]{C3} and the references therein.

\section{Resistance Function $r(x,y)$}\label{sec resistance function}

In this section, we study the resistance and the voltage functions on a metrized graph $\ga$.
After reviewing the facts that we will use about these functions, we consider the following problem.
If one considers these functions on a graph, having only combinatorial nature, consisting of vertices and edges between these vertices,
one can compute the resistance and the voltage functions by using the discrete Laplacian matrix of the graph.
However, these functions are continuous functions on a metrized graph $\ga$. A metrized graph being more than a combinatorial graph has additional structures, but still have the combinatorial properties of a graph. Therefore, there should be way to
relate the values of continuous resistance and voltage functions on $\ga$ with the values of discrete resistance and voltage functions
on the vertices of a combinatorial graph. Our goal is to clarify this relation in this section. The results we obtain in this section will be used in the next section.

For any $x$, $y$, $z$ in $\ga$, the voltage function $j_z(x,y)$ on a metrized graph
$\ga$ is a symmetric function in $x$ and $y$, which satisfies
$j_x(x,y)=0$ and $j_z(x,y) \geq 0$ for all $x$, $y$, $z$ in $\ga$.
For each vertex set $\vv{\ga}$, $j_{z}(x,y)$ is
continuous on $\ga$ as a function of all three variables.
For fixed $z$ and $y$ it
has the following physical interpretation: If $\Gamma$ is viewed
as a resistive electric circuit with terminals at $z$ and $y$,
with the resistance in each edge given by its length, then
$j_{z}(x,y)$ is the voltage difference between $x$ and $z$,
when unit current enters at $y$ and exits at $z$ (with reference
voltage $0$ at $z$).

The effective resistance between two points $x, \, y$ of a metrized graph $\ga$ is given by $r(x,y)=j_y(x,x),$
where $r(x,y)$ is the resistance function on $\ga$. The resistance function inherits certain properties of the voltage function.
For any $x$, $y$ in $\ga$,  $r(x,y)$ on
$\ga$ is a symmetric function in $x$ and $y$, and it satisfies
$r(x,x)=0$. For each vertex set $\vv{\ga}$, $r(x,y)$ is
continuous on $\ga$ as a function of two variables and
$r(x,y) \geq 0$ for all $x$, $y$ in $\ga$.
If a metrized graph $\Gamma$ is viewed as a
resistive electric circuit with terminals at $x$ and $y$, with the
resistance in each edge given by its length, then $r(x,y)$ is
the effective resistance between $x$ and $y$ when unit current enters
at $y$ and exits at $x$.

The proofs of the facts mentioned above can be found in \cite{CR}, \cite[sec 1.5 and sec 6]{BRh}, and \cite[Appendix]{Zh1}.
The voltage function $j_{z}(x,y)$ and the resistance function $r(x,y)$ are also studied in the articles \cite{BF} and \cite{C1}. 

We will denote by $R_i$ the resistance between the end points of an edge $e_i$ of a graph $\ga$ when the interior of the edge $e_i$ is deleted from $\ga$.

Let $\ga$ be a metrized graph with $p \in \vv{\ga}$, and let $e_i \in \ee{\ga}$ having end points $\pp$ and $\qq$.
If $\ga -e_i$ is connected, then $\ga$
can be transformed to the graph in Figure \ref{fig 2termp}
by circuit reductions. More details on this fact can be found in the articles \cite{CR} and \cite[Section 2]{C2}.
Note that in \figref{fig 2termp}, we have $R_{a_i,p} = \hj{\pp}{p}{\qq}$,
$R_{b_i,p} = \hj{\qq}{p}{\pp}$, $R_{c_i,p} = \hj{p}{\pp}{\qq}$, where $\hj{x}{y}{z}$
is the voltage function in $\ga-e_i$. We have $R_{a_i,p}+R_{b_i,p}=\ri$ for each $p \in \ga$.

If $\ga-e_i$ is not connected, we set $R_{b_i,p}=\ri=\infty$ and $R_{a_i,p}=0$ if $p$ belongs to the component of $\ga-e_i$
containing $\pp$, and we set $R_{a_i,p}=\ri=\infty$ and $R_{b_i,p}=0$ if $p$ belongs to the component of $\ga-e_i$
containing $\qq$. We will use this notation for the rest of the paper.
\begin{figure}
\centering
\includegraphics[scale=1]{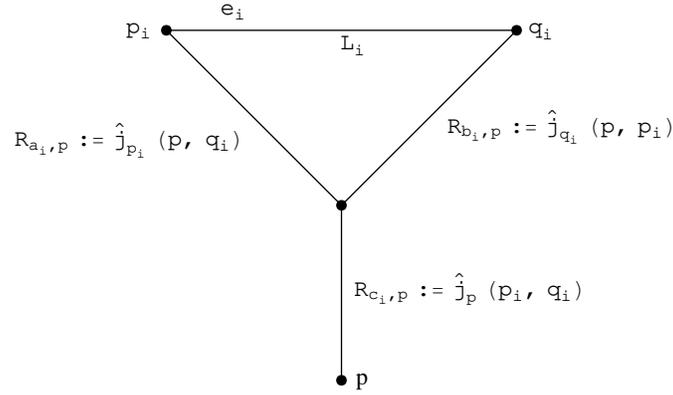} \caption{Circuit reduction with reference to an edge and a point.} \label{fig 2termp}
\end{figure}

Recall that the function $r(x,y)$ is defined on $\ga$ and has nonnegative real number values. Therefore, when we write an equality as in \lemref{lem resistance one edge} below, we mean that $x, \, y \in \ga$ on the left side of equality and that $x, \, y$ are the corresponding real numbers via the parametrization. For example, if $x$ is on edge $e_i$ of length $\li$ with end points $\pp$ and $\qq$, then we consider a parametrization identifying $e_i$ by the interval $[0,\li]$ so that the points $\pp$ and $\qq$ correspond to $0$ and $\li$, respectively; and that $x \in [0,\li]$. We follow this approach in the rest of the paper. One should note that the direction of parametrization makes no problem in our computations as long as one is careful about the adjustment of the relevant formulas.
\begin{lemma}\label{lem resistance one edge}
Let $e_i \in \ee{\ga}$ be an edge of length $\li$ with end points $p_i$ and $q_i$. If both $x$ and $y$ belong to the same edge $e_i$, then
$$r(x,y)= |x-y|-(x-y)^2 \frac{\li - r(\pp,\qq)}{\li^2}.$$
\end{lemma}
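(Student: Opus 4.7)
The plan is to use circuit reduction with reference to the edge $e_i$. Parametrize $e_i$ by $[0,\li]$ with $\pp\leftrightarrow 0$ and $\qq\leftrightarrow \li$, and assume without loss of generality that $x\leq y$, so that $|x-y|=y-x$. Since both terminals $x$ and $y$ for the resistance computation lie on $e_i$, no net current enters or leaves $\ga-e_i$ except through its two terminals $\pp$ and $\qq$. Therefore, as far as $r(x,y)$ is concerned, the entire subgraph $\ga-e_i$ can be replaced by a single resistor of value $\ri$ between $\pp$ and $\qq$ without changing the answer.

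With this reduction in place, the network between $x$ and $y$ consists of exactly two parallel branches: the direct arc of $e_i$ from $x$ to $y$, of resistance $y-x$, and the complementary route going from $x$ to $\pp$ along $e_i$ (resistance $x$), through the reduced $\ga-e_i$ (resistance $\ri$), and from $\qq$ back to $y$ along $e_i$ (resistance $\li-y$), with total resistance $x+\ri+(\li-y)=\li+\ri-(y-x)$. Applying the parallel resistor formula and simplifying gives
$$r(x,y)=\frac{(y-x)\bigl(\li+\ri-(y-x)\bigr)}{\li+\ri}=(y-x)-\frac{(y-x)^2}{\li+\ri}.$$

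To match the stated form, I will re-express $\ri$ in terms of $r(\pp,\qq)$. In the full graph $\ga$, the effective resistance between $\pp$ and $\qq$ is again the parallel combination of $e_i$ (resistance $\li$) and $\ga-e_i$ (resistance $\ri$), so $r(\pp,\qq)=\li\ri/(\li+\ri)$. A short manipulation rearranges this to $1/(\li+\ri)=(\li-r(\pp,\qq))/\li^2$, and substituting into the displayed expression produces the claimed formula.

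The only subtlety is the bridge case, when $\ga-e_i$ is disconnected. Under the convention set just before the lemma, $\ri=\infty$, so the complementary branch carries no current, the parallel formula degenerates to $r(x,y)=y-x$, and in this case $r(\pp,\qq)=\li$ as well, making the coefficient $(\li-r(\pp,\qq))/\li^2$ vanish; so the formula remains valid. I do not anticipate any serious obstacle: the argument is essentially a one-line application of the parallel law, and the main conceptual step is recognizing that the two-terminal equivalence justifies collapsing $\ga-e_i$ to the single resistor $\ri$.
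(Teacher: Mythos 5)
Your argument is correct and is essentially the paper's own proof: collapse $\ga-e_i$ to a single resistor $\ri$ between $\pp$ and $\qq$, apply the parallel law to the two branches of resistances $|x-y|$ and $\li+\ri-|x-y|$, and eliminate $\ri$ via $r(\pp,\qq)=\li\ri/(\li+\ri)$. Your explicit check of the bridge case ($\ri=\infty$) is a small addition the paper defers to its conventions and Remark 3.4, but it changes nothing in substance.
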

\begin{proof}
Using circuit reductions, this case can be illustrated as in \figref{fig sameedgexy}.
With abuse of notation, $x$ and $y$ denote both points on $e_i$ and their distances to the vertex $\pp$.
\begin{figure}
\centering
\includegraphics[scale=1]{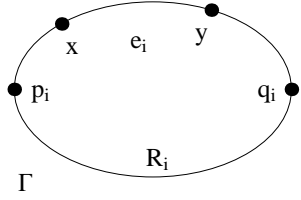} \caption{Circuit reduction with reference to an edge $e_i$ having end points $\pp$ and $\qq$.} \label{fig sameedgexy}
\end{figure}
The result follows from the fact that $r(\pp,\qq)=\frac{\li \ri}{\li+\ri}$ and that $x$ and $y$ are connected by two parallel edges with edge lengths $|x-y|$ and $\li+\ri+|x-y|$.
\end{proof}
Note that $\li$ and $r(\pp,\qq)$ can be expressed in terms of the entries of the discrete Laplacian matrix $\lm$ and its pseudo inverse $\plm$, respectively.
In this way, whenever $x$ and $y$ are chosen from the same edge, we can express the continuous function $r(x,y)$ as a piecewise linear or quadratic function with coefficients obtained by using the discrete graph representation of metrized graphs.
The condition that both $x$ and $y$ are on the same edge is an essential hypothesis in \lemref{lem resistance one edge}.
A relevant question is that what would be the corresponding formula of $r(x,y)$ if $x$ and $y$ are chosen from different edges of $\ga$.
In the rest of this section, we provide an answer to this question. First, we need the following technical lemma:
\begin{lemma}\label{lem resistance px}
Let $e_i \in \ee{\ga}$ be an edge of length $\li$ with end points $p_i$ and $q_i$. If $x$ belongs to the edge $e_i$, for any vertex $p \in \vv{\ga}$ we have
$$r(p,x)= -x^2 \frac{\li - r(\pp,\qq)}{\li^2}+x \frac{\li-r(\pp,\qq)+r(p,\qq)-r(p,\pp)}{\li}+r(p,\pp).$$
\end{lemma}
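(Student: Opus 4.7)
The plan is to reduce the computation, exactly as in the proof of \lemref{lem resistance one edge}, to a small explicit electrical network by applying the circuit reduction with reference to $e_i$ \emph{and} the vertex $p$. I first treat the case that $\ga - e_i$ is connected, in which the reduction pictured in \figref{fig 2termp} produces a network on the nodes $p,\pp,\qq$ joined through a central auxiliary node $c$ by resistors of value $R_{c_i,p}$, $R_{a_i,p}$, $R_{b_i,p}$ respectively, together with the original edge $e_i$ of length $\li$ directly connecting $\pp$ to $\qq$. The disconnected case is handled separately and boils down to computing along the unique path from $p$ to $x$ through $e_i$: using $r(\pp,\qq)=\li$ and $r(p,\qq)=r(p,\pp)\pm \li$, the claimed formula collapses to the expected linear expression in $x$.

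Next I would insert $x$ as an additional node on $e_i$ at distance $x$ from $\pp$, splitting $e_i$ into two series resistors of resistances $x$ and $\li-x$. In the resulting network the node $p$ is incident only to the single resistor $R_{c_i,p}$, so every current flowing from $x$ to $p$ must traverse it; hence
\[
r(p,x) \;=\; R_{c_i,p} + r(c,x).
\]
Between $c$ and $x$ there are now exactly two parallel paths, one through $\pp$ of total resistance $R_{a_i,p}+x$ and one through $\qq$ of total resistance $R_{b_i,p}+\li-x$; combining them in parallel and using $R_{a_i,p}+R_{b_i,p}=\ri$ yields
\[
r(p,x) \;=\; R_{c_i,p} + \frac{(R_{a_i,p}+x)(R_{b_i,p}+\li-x)}{\li+\ri}.
\]

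What remains is to rewrite this closed form in the variables of the statement. The identity $r(\pp,\qq)=\frac{\li \ri}{\li+\ri}$, already used in the proof of \lemref{lem resistance one edge}, gives $\frac{1}{\li+\ri} = \frac{\li-r(\pp,\qq)}{\li^2}$; evaluating the displayed expression at $x=0$ and $x=\li$ identifies the values $r(p,\pp)$ and $r(p,\qq)$; and subtracting these two values shows
\[
r(p,\qq)-r(p,\pp) \;=\; \frac{\li(R_{b_i,p}-R_{a_i,p})}{\li+\ri}.
\]
Expanding $(R_{a_i,p}+x)(R_{b_i,p}+\li-x)$ and collecting by powers of $x$ then matches the coefficients of $x^2$, $x$, and the constant term in the lemma. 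The only non-trivial point is the bookkeeping in this final rewriting: the coefficient of $x$ is the only piece that does not read off immediately, and it is precisely what forces the particular combination $\li-r(\pp,\qq)+r(p,\qq)-r(p,\pp)$ to appear in the statement.
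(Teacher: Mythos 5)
Your proposal is correct and follows essentially the same route as the paper: reduce to the star network of \figref{fig 2termp} with resistors $R_{a_i,p}$, $R_{b_i,p}$, $R_{c_i,p}$, compute $r(p,x)=R_{c_i,p}+\frac{(x+R_{a_i,p})(\li-x+R_{b_i,p})}{\li+\ri}$ by series--parallel reduction, and identify the coefficients via $r(\pp,\qq)=\frac{\li\ri}{\li+\ri}$ and the values at $x=0,\li$. Your separate treatment of the bridge case (where the paper instead defers to the $R_i\to\infty$ convention and \remref{rem resistance xy on bridge}) is a harmless addition.
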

\begin{proof}
Using circuit reductions, this case can be illustrated as in \figref{fig xonedgeei}.
\begin{figure}
\centering
\includegraphics[scale=1.5]{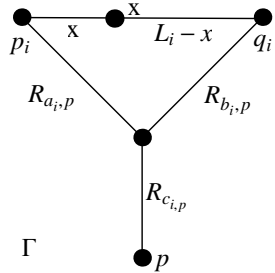} \caption{Circuit reduction with reference to an edge and a vertex.} \label{fig xonedgeei}
\end{figure}
Applying circuit reductions on the electric circuit given in \figref{fig xonedgeei}, we obtain
\begin{align}\label{eqn resistances}
r(p,x)&= \frac{(x+R_{a_i,p})(\li -x+R_{b_i,p})}{\li+R_i}+R_{c_i,p},& \qquad r(\pp,\qq)&=\frac{\li R_i}{\li+R_i}, \\
r(p,\pp)&= \frac{R_{a_i,p}(\li +R_{b_i,p})}{\li+R_i}+R_{c_i,p},&  \qquad r(p,\qq) &= \frac{R_{b_i,p}(\li +R_{a_i,p})}{\li+R_i}+R_{c_i,p}.
\end{align}
Then the result follows from these equations.
\end{proof}

\begin{theorem}\label{thm resistance pxy}
Let $e_i \in \ee{\ga}$ be an edge of length $\li$ with end points $p_i$ and $q_i$, and let $e_j \in \ee{\ga}$ be an edge of length $L_j$ with end points $p_j$ and $q_j$. Suppose the edges $e_i$ and $e_j$ are distinct, but their end points are not necessarily distinct. If $x$ belongs to the edge $e_i$ and $y$ belongs to the edge $e_j$, we have
\begin{equation*}\label{eqn resistances pxy}
\begin{split}
r(x,y) &=  -x^2 \frac{\li - r(\pp,\qq)}{\li^2}-y^2 \frac{L_j - r(p_j,q_j)}{L_j^2}
 +\frac{2xy}{\li L_j} \big(j_{p_j}(\pp,q_j)-j_{p_j}(q_i,q_j)\big)\\
& \qquad +\frac{x}{\li} \big(\li-2j_{p_i}(\qq,p_j)\big)
 +\frac{y}{L_j} \big(L_j-2j_{p_j}(\pp,q_j)\big)
 +r(\pp,p_j).
\end{split}
\end{equation*}
\end{theorem}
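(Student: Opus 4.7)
The plan is to apply \lemref{lem resistance px} twice: first in the variable $x$, then in the variable $y$, and finally to convert the resulting expression from resistance data into voltage-function data. The key observation is that \lemref{lem resistance px} expresses $r(p,x)$ as a quadratic in $x$ for any vertex $p$, and that a point $y$ can always be promoted to a vertex without altering the graph.

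First, I fix $y$ on the edge $e_j$ and enlarge the vertex set $\vv{\ga}$ by declaring $y$ an additional valence-$2$ vertex. Such an enlargement changes neither the metrized graph nor its voltage/resistance functions, so \lemref{lem resistance px} may be invoked with the role of the vertex $p$ played by $y$. This yields the intermediate identity
\begin{equation*}
r(x,y) = -x^2\,\frac{\li - r(\pp,\qq)}{\li^2} + \frac{x}{\li}\bigl(\li - r(\pp,\qq) + r(y,\qq) - r(y,\pp)\bigr) + r(y,\pp),
\end{equation*}
which is quadratic in $x$ but still involves $y$ implicitly through the two resistances $r(y,\pp)$ and $r(y,\qq)$.

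Next, I treat $r(y,\pp)$ and $r(y,\qq)$ as functions of the single variable $y$ ranging over $e_j$, with $\pp$ (respectively $\qq$) playing the role of the distinguished vertex. Applying \lemref{lem resistance px} a second time to each (now with reference edge $e_j$ of length $L_j$ and end points $p_j, q_j$) gives quadratic-in-$y$ expansions whose $y^2$ coefficients both equal $-(L_j - r(p_j,q_j))/L_j^2$. Substituting these into the intermediate identity, the $y^2$ terms inside the difference $r(y,\qq) - r(y,\pp)$ cancel, leaving a linear-in-$y$ expression there, while the isolated $r(y,\pp)$ contributes the $y^2$ term of the final formula. Collecting the six monomial types ($x^2$, $y^2$, $xy$, $x$, $y$, constant) then produces an identity whose coefficients are resistance values among the four vertices $\pp, \qq, p_j, q_j$.

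Finally, I rewrite the $x$-, $y$-, and $xy$-coefficients in voltage-function notation using the polarization identity $j_z(u,v) = \tfrac{1}{2}\bigl(r(u,z) + r(v,z) - r(u,v)\bigr)$. A direct check shows $\li - r(\pp,\qq) + r(\qq,p_j) - r(\pp,p_j) = \li - 2\,j_{\pp}(\qq,p_j)$, an analogous identity handles the $y$-coefficient, and the $xy$-coefficient $r(\pp,p_j) - r(\pp,q_j) - r(\qq,p_j) + r(\qq,q_j)$ equals $2\bigl(j_{p_j}(\pp,q_j) - j_{p_j}(\qq,q_j)\bigr)$, reproducing the stated formula exactly. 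The main obstacle is not conceptual but organizational: one must carefully justify that \lemref{lem resistance px} is insensitive to adjoining a valence-$2$ vertex (so that it can legitimately be applied with $p = y$), and then carry out the substitution-and-collection step without sign errors; after this, the derivation is mechanical.
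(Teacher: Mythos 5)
Your proposal is correct and follows essentially the same route as the paper: apply Lemma~\ref{lem resistance px} with $y$ promoted to a vertex to expand $r(x,y)$ in $x$, apply it again with vertices $p_i$ and $q_i$ on the edge $e_j$ to expand $r(y,p_i)$ and $r(y,q_i)$ in $y$, substitute, and convert the coefficients via $2j_z(u,v)=r(u,z)+r(v,z)-r(u,v)$. The coefficient identities you check (for the $x$-, $y$-, and $xy$-terms) are exactly the ones needed, so the argument matches the paper's proof.
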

\begin{proof}
Applying \lemref{lem resistance px} with edge $e_j$ containing $y$ and vertex $\pp$, we obtain
\begin{equation}\label{eqn rpiy}
r(\pp,y)= -y^2 \frac{L_j - r(p_j,q_j)}{L_j^2}+y \frac{L_j-r(p_j,q_j)+r(\pp,q_j)-r(\pp,p_j)}{L_j}+r(\pp,p_j).
\end{equation}
Similarly, applying \lemref{lem resistance px} with edge $e_j$ containing $y$ and vertex $\qq$ gives
\begin{equation}\label{eqn rqiy}
r(\qq,y)= -y^2 \frac{L_j - r(p_j,q_j)}{L_j^2}+y \frac{L_j-r(p_j,q_j)+r(\qq,q_j)-r(\qq,p_j)}{L_j}+r(\qq,p_j).
\end{equation}
Now, we fix a point $y \in \ee{e_j}$ and consider it as a vertex, and apply \lemref{lem resistance px} with edge $e_i$ containing $x$ and vertex $y$. In this way, we obtain
\begin{equation}\label{eqn rxy}
r(x,y)= -x^2 \frac{\li - r(\pp,\qq)}{\li^2}+x \frac{\li-r(\pp,\qq)+r(y,\qq)-r(y,\pp)}{\li}+r(y,\pp).
\end{equation}
Using the fact that resistance function is symmetric, we substitute Equations (\ref{eqn rpiy}) and (\ref{eqn rqiy}) into \eqnref{eqn rxy} to obtain
\begin{equation}\label{eqn resistances pxy2}
\begin{split}
r(x,y) &=  -x^2 \frac{\li - r(\pp,\qq)}{\li^2}-y^2 \frac{L_j - r(p_j,q_j)}{L_j^2}\\
&\qquad +\frac{xy}{\li L_j} \big(r(\pp,p_j)-r(\pp,q_j)-r(\qq,p_j)+r(\qq,q_j)\big)\\
& \qquad +\frac{x}{\li} \big(\li-r(\pp,\qq)+r(\qq,p_j)-r(\pp,p_j)\big)\\
&\qquad +\frac{y}{L_j} \big(L_j-r(p_j,q_j)+r(\pp,q_j)-r(\pp,p_j)\big)\\
& \qquad +r(\pp,p_j).
\end{split}
\end{equation}
Then the result follows using the fact that $2j_x(y,z)=r(x,y)+r(x,z)-r(y,z)$ for any $x, \, y, \, z \in \ga$.
\end{proof}

\begin{remark}\label{rem resistance xy on bridge}
Whenever the edges $e_i$ and $e_j$ that $x$ and $y$ belongs to are bridges, i.e. $\ga-e_i$ or $\ga-e_j$ are disconnected, we obtain the following results by letting $R_i \rightarrow \infty$ or $R_j \rightarrow \infty$ in the formulas given in \lemref{lem resistance one edge} and \thmref{thm resistance pxy}.
\begin{enumerate}
\item $r(x,y)= |x-y|$, if both $x$ and $y$ are on the same edge that is a bridge.
\item
\begin{equation*}\label{eqn res1}
r(p,x)=\begin{cases}x+r(p,\pp), &  \, \text{if $p$ is on the side of $\pp$}, \\
\li-x+r(p,\qq), &  \, \text{if $p$ is on the side of $\qq$},
\end{cases}
\end{equation*}
\item If both $e_i$ and $e_j$ are bridges that are distinct edges, then we have
\begin{equation*}\label{eqn res2}
r(x,y)=\begin{cases}x+y+r(\pp,p_j), &  \, \text{if $\pp$ and $p_j$ are  between $x$ and $y$}, \\
x+L_j-y+r(\pp,q_j), &  \, \text{if $\pp$ and $q_j$ are  between $x$ and $y$},\\
L_i-x+y+r(q_i,p_j), &  \, \text{if $q_i$ and $p_j$ are  between $x$ and $y$},\\
L_i-x+L_j-y+r(\qq,q_j), &  \, \text{if $\qq$ and $q_j$ are between $x$ and $y$}.
\end{cases}
\end{equation*}
\item Suppose only $e_i$ is a bridge (the case that only $e_j$ is a bridge can be done by imitating this case). Then
we have two cases:

If $y$ is on the side of $\pp$, we have
$$r(x,y)=x-y^2 \frac{L_j - r(p_j,q_j)}{L_j^2}+y \frac{L_j-r(p_j,q_j)+r(\pp,q_j)-r(\pp,p_j)}{L_j}+r(\pp,p_j).$$

If $y$ is on the side of $\qq$, we have
$$r(x,y)=\li-x-y^2 \frac{L_j - r(p_j,q_j)}{L_j^2}+y \frac{L_j-r(p_j,q_j)+r(\qq,q_j)-r(\qq,p_j)}{L_j}+r(\qq,p_j).$$
\end{enumerate}
\end{remark}
\begin{remark}\label{rem voltage xy}
Since $2j_x(y,z)=r(x,y)+r(x,z)-r(y,z)$ for any $x, \, y, \, z \in \ga$, one can use \thmref{thm resistance pxy} for each of
$r(x,y)$, $r(x,z)$ and $r(y,z)$ to express the voltage function $j_x(y,z)$ in terms of its values on vertices of $\ga$.
\end{remark}

\section{Arakelov-Green Function $g_{\mu_{can}}(x,y)$ }\label{sec arakelov green function}

In this section, we first give the definition of Arakelov-Green functions $g_\mu(x,y)$ on a metrized graph $\ga$. Then
we study the Arakelov-Green function $g_{\mu_{can}}(x,y)$ defined with respect to a canonical measure $\mu_{can}$ $\ga$.
Our goal is to clarify the combinatorial interpretation of  $g_{\mu_{can}}(x,y)$.

For any real-valued, signed Borel measure $\mu$ on $\Gamma$ with
$\mu(\Gamma)=1$ and $|\mu|(\Gamma) < \infty$, define the function
$j_{\mu}(x,y) \ = \ \int_{\Gamma} j_{z}(x,y) \, d\mu({z}).$
Clearly $j_{\mu}(x,y)$ is symmetric, and is jointly continuous in
$x$ and $y$. Chinburg and Rumely \cite{CR} discovered that there is a unique real-valued, signed Borel measure $\mu=\mu_{can}$
such that $j_{\mu}(x,x)$ is constant on $\ga$. The measure $\mu_\can$ is called the
\textit{canonical measure}. One can find several interpretations of $\mu_{can}$ in the articles \cite{BRh} and \cite{C2}.
Baker and Rumely \cite[Section 14]{BRh} called the constant $\frac{1}{2}j_{\mu}(x,x)$ the \textit{tau constant} of $\ga$
and denoted it by $\tg$. The following lemma gives a description of the tau constant.
\begin{lemma}\cite[Lemma 14.4]{BRh}\label{lemtauformula}
For any fixed $y$ in $\ga$,
$\tg =\frac{1}{4}\int_{\ga}\big(\frac{\partial}{\partial x} r(x,y) \big)^2dx$.
\end{lemma}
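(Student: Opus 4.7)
The plan is to reduce the identity to a single application of Green's identity on $\ga$, using the measure-valued Laplacian $\Delta$ developed by Baker--Rumely. The essential input is the distributional identity
\[
\Delta_{x}\, r(x,y) \;=\; 2\,\mucan(x) \;-\; 2\,\delta_{y}(x)
\]
as signed measures on $\ga$, for any fixed $y$, in Baker--Rumely's sign convention $\Delta=-\partial_{x}^{2}$ on edge interiors (together with vertex corrections coming from jumps in directional derivatives). This identity encodes two things: the derivative-jump of size $2$ at $y$ visible from the piecewise-linear term $|x-y|$ in \lemref{lem resistance one edge} contributes the $-2\delta_{y}$ piece, while the quadratic pieces of $r(x,y)$ on the remaining edges produce exactly the density $2\mucan$. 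Equivalently, one can read it as a characterization of $\mucan$: the signed measure $\mu:=\tfrac{1}{2}\Delta_{x} r(x,y)+\delta_{y}$ has total mass one (since $\Delta_x r$ integrates to zero), and a short computation using the relation $2j_{z}(x,y)=r(x,z)+r(y,z)-r(x,y)$ shows that $j_{\mu}(x,x)$ is constant in $x$; by the Chinburg--Rumely uniqueness statement recalled in the text, $\mu=\mucan$ (which in particular is independent of $y$).

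Once this identity is in hand, Green's identity applied to $f(x)=r(x,y)$ on the boundaryless metrized graph $\ga$ yields
\[
\int_{\ga} r(x,y)\,\Delta_{x}\, r(x,y) \;=\; \int_{\ga}\Bigl(\tfrac{\partial}{\partial x}\, r(x,y)\Bigr)^{2}\,dx.
\]
Substituting the Laplacian identity and using $r(y,y)=0$, the left-hand side becomes
\[
2\!\int_{\ga}\!r(x,y)\,d\mucan(x)\;-\;2\,r(y,y)
\;=\;2\,j_{\mucan}(y,y)
\;=\;4\,\tg,
\]
where the last equality is the defining property of $\tg$ as $\tfrac{1}{2}j_{\mucan}(x,x)$. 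Dividing by four gives the formula.

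The main obstacle is setting up the measure-valued Laplacian on $\ga$ carefully and justifying the distributional identity $\Delta_{x} r(x,y)=2\mucan-2\delta_{y}$ cleanly---in particular, handling the Dirac contribution at $y$ and the vertex corrections of $\Delta$ in a uniform way. Once that framework (as in Baker--Rumely, \S6 and \S14) is in place, the Green's identity step and the subsequent substitution are routine, and the independence of the resulting integral from $y$ (which is built into the statement) reflects the fact that $\mu=\mucan$ does not depend on $y$.
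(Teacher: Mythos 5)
Your argument is correct: the identity $\Delta_x r(x,y)=2\mucan(x)-2\delta_y(x)$ is exactly part (1) of \thmref{thm arakelov-green canonical} (so you need not re-derive it via the Chinburg--Rumely uniqueness argument), Green's identity for the Baker--Rumely Laplacian applies to $x\mapsto r(x,y)$ since it is piecewise quadratic, and the evaluation $\int_\ga r(x,y)\,d\mucan(x)=j_{\mucan}(y,y)=2\tg$ together with $r(y,y)=0$ gives $4\tg$ on the left side, as you say. Note, however, that the paper itself offers no proof of this lemma --- it is quoted directly from \cite[Lemma 14.4]{BRh} --- so there is no in-paper argument to compare with; your proof is essentially the standard one found in Baker--Rumely, and the only part that genuinely requires their machinery is the justification of the measure-valued Laplacian and the Green/Dirichlet identity, which you correctly flag as the technical input.
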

One can find more detailed information on $\tg$ in articles \cite{C1}, \cite{C3}, \cite{C4} and \cite{C5}.

Let $\mu$ be a real-valued signed Borel measure of total mass $1$ on
$\Gamma$. In the article \cite{BRh}, the \textit{Arakelov-Green's function} $g_\mu(x,y)$ associated to
$\mu$ is defined to be
$$g_\mu(x,y) = \int_\Gamma j_z(x,y) \, d\mu(z) - \int_{\Gamma^3}
j_z(x,y) \, d\mu(z) d\mu(x) d\mu(y),$$
where the latter integral is a constant that depends on $\ga$ and $\mu$.

As shown in the article \cite{BRh}, $g_{\mu}(x,y)$ is continuous,
symmetric (i.e., $g_{\mu}(x,y)=g_{\mu}(y,x)$, for each $x$ and $y$),
and for each $y$, $\int_{\Gamma} g_{\mu}(x,y) \, d\mu(x) \ = \ 0 \ $.
More precisely, as shown in the article \cite{BRh}, one can characterize
$g_\mu(x,y)$ as the unique function on $\Gamma \times \Gamma$ such
that
\begin{itemize}
\item[$(1)$]
$g_\mu(x,y)$ is jointly continuous in $x, y$ and belongs to
$\BDV_\mu(\Gamma)$ as a
 function of $x$, for each fixed $y$, where $\BDV_\mu(\Gamma) := \{ f \in \BDV(\Gamma) \; : \; \int_\Gamma f \,
d\mu = 0 \}$ and $BDV(\Gamma)$ is space of continuous functions of bounded differential
variation $\Gamma$.
\item[$(2)$]
For fixed $y$, $g_\mu$ satisfies the identity
$
\Delta_x g_\mu(x,y) = \delta_y(x) - \mu(x).
$
\item[$(3)$]
$\iint_{\Gamma \times \Gamma} g_\mu(x,y) d\mu(x) d\mu(y) = 0$.
\end{itemize}
Precise definitions of $\BDV(\Gamma)$, and of $\Delta
f$ for $f \in \BDV(\Gamma)$, can be found in \cite{BRh}.
%
%

Arakelov-Green function $g_\mu(x,y)$ satisfies the following properties (a detailed proof can be found in \cite[Theorem 2.11]{CR} and \cite[pg. 34]{C1}):
\begin{theorem}\cite[Theorem 14.1]{BRh}\label{thm arakelov-green canonical}
\begin{enumerate}
\item The probability measure $\mu_{can}=\Delta_x(\frac{1}{2}r(x,y))+\delta_y(x)$ is independent of $y \in \ga$.
\item $\mu_{can}$ is the unique measure $\mu$ of total mass $1$ on $\ga$ for which $g_\mu(x,x)$ is a constant independent of $x$.
\item There is a constant $\tg \in \RR$ such that $g_{\mu_{can}}(x,y)=-\frac{1}{2}r(x,y)+\tg$.
\end{enumerate}
\end{theorem}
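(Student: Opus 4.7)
The plan is to treat parts (1), (3), (2) in that order, since (2) and (3) both rely on the well-definedness of $\mu_{can}$ asserted in (1).

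For part (1), the key step is to show that the $y$-dependence of $\Delta_{x}\bigl(\tfrac{1}{2}r(x,y)\bigr)+\delta_{y}(x)$ cancels. I would exploit the identity $2j_{z}(x,y)=r(x,z)+r(y,z)-r(x,y)$ recalled in \remref{rem voltage xy}: taking $z=y_{2}$ and replacing $y$ by $y_{1}$ yields $r(x,y_{1})-r(x,y_{2})=r(y_{1},y_{2})-2\,j_{y_{2}}(x,y_{1})$. Applying $\tfrac{1}{2}\Delta_{x}$ kills the constant $r(y_{1},y_{2})$, and the defining property $\Delta_{x} j_{y_{2}}(x,y_{1})=\delta_{y_{1}}-\delta_{y_{2}}$ of the voltage function then gives $\tfrac{1}{2}\Delta_{x} r(x,y_{1})+\delta_{y_{1}}=\tfrac{1}{2}\Delta_{x} r(x,y_{2})+\delta_{y_{2}}$, so the candidate measure $\mu_{can}$ is independent of $y$. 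Total mass~$1$ follows since $\Delta_{x}$ of a $\BDV$ function has total mass zero while $\delta_{y}$ has total mass one.

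For part (3), I would set $h(x,y):=g_{\mu_{can}}(x,y)+\tfrac{1}{2}r(x,y)$. The characterising identity $\Delta_{x} g_{\mu_{can}}(x,y)=\delta_{y}-\mu_{can}$ from the bullet list combines with $\Delta_{x}\bigl(\tfrac{1}{2}r(x,y)\bigr)=\mu_{can}-\delta_{y}$ obtained in (1) to give $\Delta_{x} h(x,y)\equiv 0$. Hence for each fixed $y$, $x\mapsto h(x,y)$ is harmonic on the connected compact metrized graph $\ga$ and therefore constant in $x$; by symmetry of both $g_{\mu_{can}}$ and $r$, it is also constant in $y$, and one sets this common value to be $\tg$, yielding $g_{\mu_{can}}(x,y)=-\tfrac{1}{2}r(x,y)+\tg$.

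For part (2), suppose $\mu$ is a probability measure with $g_{\mu}(x,x)\equiv C$ and set $\phi(x,y):=g_{\mu}(x,y)+\tfrac{1}{2}r(x,y)$. The same cancellation gives $\Delta_{x}\phi(x,y)=\mu_{can}-\mu$, which is independent of $y$. Hence for any fixed $y_{1},y_{2}$ the map $x\mapsto\phi(x,y_{1})-\phi(x,y_{2})$ is harmonic, therefore constant in $x$; specialising to $x=y_{1}$ and $x=y_{2}$ and using the symmetry $\phi(x,y)=\phi(y,x)$ together with $\phi(x,x)=C$ forces $\phi\equiv C$. Then $\Delta_{x}g_{\mu}=-\Delta_{x}\bigl(\tfrac{1}{2}r(x,y)\bigr)=\delta_{y}-\mu_{can}$, and comparing with $\Delta_{x}g_{\mu}=\delta_{y}-\mu$ yields $\mu=\mu_{can}$. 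The main obstacle is really (1), since it requires taking $\Delta_{x}$ as a distributional operator on $\ga$ and using that the merely continuous function $r(x,y)$ has a well-defined measure-valued Laplacian in $x$; after that, (3) is a one-line calculation and (2) is a short harmonic-rigidity argument.
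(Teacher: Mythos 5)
Your argument is correct, but note that the paper itself offers no proof of \thmref{thm arakelov-green canonical}: it is quoted from \cite[Theorem 14.1]{BRh}, with pointers to \cite[Theorem 2.11]{CR} and \cite[pg.~34]{C1} for detailed proofs, so there is no internal proof to compare against. What you have written is a sound reconstruction along the standard lines of the Baker--Rumely/Chinburg--Rumely argument: for (1) the identity $2j_{y_2}(x,y_1)=r(x,y_2)+r(y_1,y_2)-r(x,y_1)$ together with $\Delta_x j_{y_2}(x,y_1)=\delta_{y_1}-\delta_{y_2}$ does exactly the required cancellation (your signs agree with the convention forced by part (1) of the theorem, as one can check on the circle, where $\mu_{can}=dx/\elg$); for (3) the cancellation of Laplacians plus the rigidity ``$f\in\BDV(\Gamma)$, $\Delta f=0\Rightarrow f$ constant'' and the symmetry of $g_{\mu_{can}}$ and $r$ give the formula; and your harmonic-rigidity uniqueness argument for (2), evaluating $\phi(x,y_1)-\phi(x,y_2)$ at $x=y_1,y_2$ and using symmetry, is a clean way to get $\mu=\mu_{can}$ from the characterizing identity $\Delta_x g_\mu(x,y)=\delta_y(x)-\mu(x)$. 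Two small caveats: (i) you implicitly use that $r(x,\cdot)$ and $j_z(x,y)$ lie in the Zhang/BDV class so that $\Delta_x$ is defined and has total mass zero -- these are exactly the facts supplied by \cite{BRh}, so citing them explicitly would close the argument; (ii) you prove total mass one but not positivity, which is fine, since despite the wording ``probability measure'' the canonical measure is in general only a signed measure of total mass one (by the Chinburg--Rumely formula it carries point mass $1-\tfrac{1}{2}\va(p)<0$ at any vertex of valence at least three, e.g.\ at the center of a Y-shaped tree), so the total-mass statement is the correct thing to verify.
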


Since $r(x,x)=0$ for every $x \in \ga$, the diagonal values $g_{\mu_{can}}(x,x)$ are constant on $\ga$, and are equal to the tau constant $\tg$.

Next, we state the main result of this paper:
\begin{theorem}\label{thm main1}
Suppose $e_i \in \ee{\ga}$ is an edge of length $\li$ with end points $p_i$ and $q_i$, and $e_j \in \ee{\ga}$ is an edge of length $L_j$ with end points $p_j$ and $q_j$. Assume that the edges $e_i$ and $e_j$ are not bridges and distinct edges, but their end points are not necessarily distinct. If $x$ belongs to the edge $e_i$ and $y$ belongs to the edge $e_j$, we have
\begin{equation*}\label{eqn arakelov green function formula1}
\begin{split}
g_{\mu_{can}}(x,y) &= \tg +x^2 \frac{\li - r(\pp,\qq)}{2\li^2}+y^2 \frac{L_j - r(p_j,q_j)}{2L_j^2}
 -\frac{xy}{\li L_j} \big(j_{p_j}(\pp,q_j)-j_{p_j}(q_i,q_j)\big)\\
& \qquad -\frac{x}{2\li} \big(\li-2j_{p_i}(\qq,p_j)\big)
 -\frac{y}{2L_j} \big(L_j-2j_{p_j}(\pp,q_j)\big)
 -\frac{1}{2}r(\pp,p_j).
\end{split}
\end{equation*}
If both $x$ and $y$ belong to the same edge $e_i$ of length $\li$ with end points $p_i$ and $q_i$, then we have
\begin{equation*}\label{eqn arakelov green function formula2}
\begin{split}
g_{\mu_{can}}(x,y) = \tg -  \frac{1}{2}|x-y|+(x-y)^2 \frac{\li - r(\pp,\qq)}{2\li^2}.
\end{split}
\end{equation*}
\end{theorem}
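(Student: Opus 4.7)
The proof is essentially a direct substitution, so the main task in the write-up is to identify the two inputs and point out that the arithmetic matches. Concretely, by Theorem~\ref{thm arakelov-green canonical}(3) we have the identity
\[
g_{\mu_{\can}}(x,y) \;=\; \tg \;-\; \tfrac{1}{2}r(x,y)
\]
valid for all $x,y\in\ga$, so the entire content of the statement is a formula for $r(x,y)$ when $x$ and $y$ lie on prescribed edges. Thus my plan is to split into the two cases of the theorem and substitute the previously derived resistance formulas.

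For the case where $x$ and $y$ lie on the same edge $e_i$, I would apply \lemref{lem resistance one edge} to obtain
\[
r(x,y)\;=\;|x-y|-(x-y)^2\,\frac{\li-r(\pp,\qq)}{\li^2},
\]
multiply through by $-1/2$, and add $\tg$; this is term-by-term the quadratic expression claimed for the same-edge case. Since $e_i$ is assumed not to be a bridge, the quantity $\li-r(\pp,\qq)$ is strictly positive (because $r(\pp,\qq)<\li$ when $\ga-e_i$ is connected), which is consistent with the quadratic piece not degenerating.

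For the case where $x$ and $y$ lie on distinct non-bridge edges $e_i,e_j$, I would apply \thmref{thm resistance pxy} to get the expression for $r(x,y)$ in terms of voltages $j_{p_i}(\qq,p_j)$, $j_{p_j}(\pp,q_j)$, $j_{p_j}(\qq,q_j)$, and the vertex resistance $r(\pp,p_j)$. Again multiplying by $-1/2$ and adding $\tg$ reproduces exactly the coefficients in the first displayed formula: the two $x^2,y^2$ terms and the mixed $xy$ coefficient involving $j_{p_j}(\pp,q_j)-j_{p_j}(\qq,q_j)$ come directly from the corresponding quadratic terms in \thmref{thm resistance pxy}, while the linear-in-$x$, linear-in-$y$, and constant pieces are simply $-1/2$ times the matching terms there.

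Since every ingredient is already in place, there is no genuine obstacle: the proof is a one-line appeal to \thmref{thm arakelov-green canonical}(3) together with the resistance formulas of \secref{sec resistance function}. The only thing I would be careful about in writing is to state explicitly that the non-bridge hypothesis on $e_i$ and $e_j$ ensures that \lemref{lem resistance px} and \thmref{thm resistance pxy} apply with finite $R_i,R_j$, so that the quadratic form of $r(x,y)$ given there is the correct input; the bridge cases would instead be handled by \remref{rem resistance xy on bridge} and produce the linear pieces mentioned in the introduction but not in this theorem's statement.
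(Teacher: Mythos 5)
Your proposal is correct and follows exactly the paper's own argument: the paper proves \thmref{thm main1} in one line by combining \thmref{thm arakelov-green canonical}(3) with \lemref{lem resistance one edge} and \thmref{thm resistance pxy}, which is precisely your substitution of $g_{\mu_{can}}(x,y)=\tg-\tfrac{1}{2}r(x,y)$ into the two resistance formulas. Your added remarks on the non-bridge hypothesis (finiteness of $R_i,R_j$) and the deferral of bridge cases to \remref{rem resistance xy on bridge} are consistent with how the paper handles them in \thmref{thm main2}.
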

\begin{proof}
The result follows from \thmref{thm arakelov-green canonical} along with \lemref{lem resistance one edge} and \thmref{thm resistance pxy}.
\end{proof}
If any of the involved edges in \thmref{thm main1} is a bridge, then we interpret the given formulas by using \remref{rem resistance xy on bridge}. In such cases, we obtain the following modified version of \thmref{thm main1} by applying \thmref{thm arakelov-green canonical} and \remref{rem resistance xy on bridge}:
\begin{theorem}\label{thm main2}
Suppose $e_i \in \ee{\ga}$ is an edge of length $\li$ with end points $p_i$ and $q_i$, and $e_j \in \ee{\ga}$ is an edge of length $L_j$ with end points $p_j$ and $q_j$. Let $x$ and $y$ belong to $e_i$ and $e_j$, respectively.
\begin{enumerate}
\item If both $x$ and $y$ are on the same edge that is a bridge, we have
$$g_{\mu_{can}}(x,y)= \tg- \frac{1}{2}|x-y|.$$
\item If both $e_i$ and $e_j$ are bridges that are distinct edges, then we have
\begin{equation*}\label{eqn res2b}
g_{\mu_{can}}(x,y)=\begin{cases} \tg -\frac{1}{2}\big(x+y+r(\pp,p_j)\big), &  \, \text{if $\pp$ and $p_j$ are  between $x$ and $y$}, \\
\tg- \frac{1}{2}\big(x+L_j-y+r(\pp,q_j) \big), &  \, \text{if $\pp$ and $q_j$ are  between $x$ and $y$},\\
\tg- \frac{1}{2}\big(L_i-x+y+r(p_j,q_i)\big), &  \, \text{if $p_j$ and $q_i$ are  between $x$ and $y$},\\
\tg- \frac{1}{2}\big(L_i-x+L_j-y+r(\qq,q_j)\big), &  \, \text{if $\qq$ and $q_j$ are between $x$ and $y$}.
\end{cases}
\end{equation*}
\item Suppose only $e_i$ is a bridge (the case that only $e_j$ is a bridge can be done similar to this case). Then
we have two cases:

If $y$ is on the side of $\pp$, we have
\begin{equation*}\label{eqn gmucana}
\begin{split}
g_{\mu_{can}}(x,y)& =\tg+y^2 \frac{L_j - r(p_j,q_j)}{2L_j^2}-y \frac{L_j-r(p_j,q_j)+r(\pp,q_j)-r(\pp,p_j)}{2L_j}\\
& \qquad -\frac{1}{2}\big(x +r(\pp,p_j) \big).
\end{split}
\end{equation*}

If $y$ is on the side of $\qq$, we have
\begin{equation*}\label{eqn gmucanb}
\begin{split}
g_{\mu_{can}}(x,y)& =\tg +y^2 \frac{L_j - r(p_j,q_j)}{2L_j^2}-y \frac{L_j-r(p_j,q_j)+r(\qq,q_j)-r(\qq,p_j)}{2L_j}\\
& \qquad -\frac{1}{2}\big(\li-x +r(\qq,p_j) \big).
\end{split}
\end{equation*}
\end{enumerate}
\end{theorem}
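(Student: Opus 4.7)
The plan is to reduce Theorem \ref{thm main2} to the resistance-function formulas collected in Remark \ref{rem resistance xy on bridge} by invoking the identity $g_{\mu_{can}}(x,y) = \tg - \tfrac{1}{2} r(x,y)$ from Theorem \ref{thm arakelov-green canonical}(3). Since each clause of the theorem asserts an identity between $g_{\mu_{can}}(x,y)$ and a piecewise expression, once the correct case of the remark is matched the proof is pure substitution; no fresh analytic input is needed beyond what is already in Section \ref{sec resistance function}.

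Concretely, I would proceed case by case. For (1), $x$ and $y$ lie on the same bridge $e_i$, and Remark \ref{rem resistance xy on bridge}(1) gives $r(x,y) = |x-y|$; substituting yields $g_{\mu_{can}}(x,y) = \tg - \tfrac{1}{2}|x-y|$. For (2), both $e_i$ and $e_j$ are bridges. Removing either disconnects $\ga$, so there is a unique arc in $\ga$ from $x$ to $y$, and which pair of endpoints of $e_i$ and $e_j$ lies on that arc determines one of four subcases. For each subcase the resistance value is supplied by Remark \ref{rem resistance xy on bridge}(3), and plugging into $\tg - \tfrac12 r(x,y)$ produces the four lines of the displayed formula. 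For (3), only $e_i$ is a bridge, so removal of $e_i$ splits $\ga$ into the component containing $\pp$ and the one containing $\qq$; the two subcases of the theorem correspond to which of these components contains $y$. The resistance formulas in Remark \ref{rem resistance xy on bridge}(4) then substitute directly, giving a quadratic-in-$y$, linear-in-$x$ expression.

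The main obstacle — really bookkeeping rather than a genuine difficulty — is to confirm that the case analysis exhausts every geometric configuration of $x$ and $y$ relative to the bridge(s), and that the endpoint labels $\pp,\qq,p_j,q_j$ appearing in Remark \ref{rem resistance xy on bridge} are applied consistently with the conventions of Theorem \ref{thm main2}. Since the remark itself was obtained by taking $R_i \to \infty$ or $R_j \to \infty$ in Lemma \ref{lem resistance one edge} and Theorem \ref{thm resistance pxy}, the stated identities inherit their validity from those two results without requiring any new circuit-reduction argument.
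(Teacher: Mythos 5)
Your proposal is correct and matches the paper's own argument: the paper derives Theorem \ref{thm main2} exactly by combining the identity $g_{\mu_{can}}(x,y)=\tg-\tfrac{1}{2}r(x,y)$ from Theorem \ref{thm arakelov-green canonical} with the bridge-case resistance formulas of Remark \ref{rem resistance xy on bridge}, case by case. Your matching of the theorem's three cases to items (1), (3), and (4) of the remark is the intended bookkeeping, so nothing is missing.
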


Recall that $g_{\mu_{can}}(x,y)$ is a symmetric function, i.e., $g_{\mu_{can}}(x,y)=g_{\mu_{can}}(y,x)$, and that it is continuous in $x$ and $y$. It is clear from \thmref{thm main1} that $g_{\mu_{can}}(x,y)$ is a piece-wisely defined function on each pair of edges $(e_i, e_j)$. Based on these information about $g_{\mu_{can}}(x,y)$ and \thmref{thm main1}, we suggest that a matrix $\zm$ defined below can be used to
describe $g_{\mu_{can}}(x,y)$. We call $\zm$ the value matrix of $g_{\mu_{can}}(x,y)$.

We define $\zm=(z_{ij})$ as a matrix of size $e \times e$, where $e$ is the number of edges of $\ga$, such that $z_{ij}$ is equal to $g_{\mu_{can}}(x,y)$ when $x \in e_i$ and $y \in e_j$. We note that $\zm$ is a symmetric matrix. The diagonal values of $\zm$ are of the form $a_2(x-y)^2+a_1 (x-y)+a_0$ for some constants $a_0$, $a_1$ and $a_2$, where $a_2 =0$ iff the edge that $x$ and $y$ belong to is a bridge. Other entries of $\zm$ are of the form
$a x^2 + b y^2 + c x y+ d x+e y+f$ for some constants $a$, $b$, $c$, $d$, $e$ and $f$, where $e_i$ is a bridge iff $a=0$ and that $e_j$ is a bridge iff $b=0$. We provide various examples in \secref{sec computations}.

\section{ Arakelov-Green Function $g_{\mu_{D}}(x,y)$ }\label{sec arakelov green function2}
In this section, we consider another important Arakelov-Green function $g_{\mu_{D}}(x,y)$ defined by Zhang \cite[Section 3]{Zh1} as the generalization of $g_{\mu_{can}}(x,y)$. Here, $g_{\mu_{D}}(x,y)$ is defined with respect to the measure $\mu_D(x)$, where $D$ is a measure on $\ga$. More precisely,
for any divisor $D=\sum_{q \in \vv{\ga}}a_q \cdot q$ on $\ga$ with $\deg(D) \neq -2$ and for the corresponding measure (called admissible metric on $\ga$ with respect to $D$)
$$\mu_D(x)=\frac{1}{\deg(D)+2}(\sum_{q \in \vv{\ga}}a_q \dd{q}(x)+2 \mu_{can}(x)),$$
$g_{\mu_{D}}(x,y)$ can be given as follows \cite[Section 4.4]{C1}:
\begin{equation}\label{eqn arakelov green zh}
\begin{split}
g_{\mu_{D}}(x,y)=\frac{1}{\deg(D)+2}
\Big(\sum_{s \in \vv{\ga}}a_s \cdot j_s(x,y) + 4 \tg -r(x,y) \Big) - \cd,
\end{split}
\end{equation}
where
\begin{equation*}
\begin{split}
\cd = \frac{1}{2(\deg(D)+2)^2}\Big( 8 \tg (\deg(D)+1) + \sum_{q, \, s \, \in \vv{\ga}} a_q \cdot a_s \cdot r(q,s) \Big).
\end{split}
\end{equation*}
Note that $g_{\mu_{D}}(x,y)=g_{\mu_{can}}(x,y)$ and $\mu_D(x)=\mu_{can}$ if $D=0$.

Using \thmref{thm resistance pxy}, \lemref{lem resistance one edge}, \remref{rem voltage xy} and \eqnref{eqn arakelov green zh}, one can extend \thmref{thm main1} and \thmref{thm main2} to a formula for $g_{\mu_{D}}(x,y)$.

\section{Computational Examples For $g_{\mu_{can}}(x,y)$ }\label{sec computations}

We first give two examples for symbolic computations, and then an example with numerical computations.
Given a metrized graph $\ga$ with discrete Laplacian $\lm$, we first compute the pseudo inverse $\plm$ of $\lm$. We can compute the tau constant $\tg$ symbolically for certain graphs or numerically for all graphs by using $\lm$ and $\plm$ as shown in \cite[Theorem 1.1]{C3}. Then we compute the resistance matrix $\rmm$ using the matrix $\plm$ along with \lemref{lem resistance and voltage} given below. Finally, we compute the value matrix $\zm$ using either \thmref{thm main1} or \thmref{thm main2}.

We first recall that both voltage and resistance values on vertices can be expressed in terms of the entries of pseudo inverse $\plm$ of $\lm$ (see \cite[Lemmas 3.4 and 3.5]{C2} and the related references therein):
\begin{lemma}\label{lem resistance and voltage}
For any $p$, $q$, $s$ in $\vv{\ga}$, we have
\begin{equation*}\label{}
\begin{split}
r(p,q)=l_{pp}^+-2l_{pq}^+ + l_{qq}^+, \qquad \text{and} \quad
j_p(q,s)=l_{pp}^+ - l_{pq}^+ -l_{ps}^+ +l_{qs}^+.
\end{split}
\end{equation*}
\end{lemma}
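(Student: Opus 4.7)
The plan is to reduce the continuous statement to a finite-dimensional linear-algebra identity by exploiting the electrical network interpretation of $r$ and $j_z$ from the previous section. Since $p, q, s$ are vertices of $\ga$, the value of $r(p,q)$ coincides with the effective resistance of the finite resistor network whose vertex set is $\vv{\ga}$ and whose edges carry resistance equal to their lengths, and similarly $j_p(q,s)$ is recovered as a nodal potential in that network. Hence the task becomes expressing these discrete circuit quantities through the pseudo-inverse $\plm$.

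First I would set up Kirchhoff's laws in matrix form. Because $\ga$ is connected, $\lm$ has rank $v-1$ with $\ker\lm$ spanned by the all-ones vector $\mathbf{1}$, and $\plm$ is symmetric with $\plm\mathbf{1}=0$. For any external current vector $\mathbf{b}$ satisfying $\mathbf{b}^T\mathbf{1}=0$ (i.e., in $\mathrm{row}(\lm)$), the potential vector is given by $\mathbf{u}=\plm\mathbf{b}$; this is a genuine solution of $\lm\mathbf{u}=\mathbf{b}$, and any other solution differs from it by a constant, so voltage differences $u_a-u_b$ are unambiguously determined.

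To obtain the resistance formula, I would choose $\mathbf{b}=\mathbf{e}_q-\mathbf{e}_p$ (unit current in at $q$, out at $p$), yielding
\[
r(p,q)=u_q-u_p=(\mathbf{e}_q-\mathbf{e}_p)^T\plm(\mathbf{e}_q-\mathbf{e}_p)=l_{qq}^+-l_{qp}^+-l_{pq}^+ +l_{pp}^+,
\]
which collapses to $l_{pp}^+-2l_{pq}^++l_{qq}^+$ by symmetry of $\plm$. For the voltage identity, I would instead take $\mathbf{b}=\mathbf{e}_s-\mathbf{e}_p$ (unit current in at $s$, out at $p$) with reference potential $0$ at $p$, giving
\[
j_p(q,s)=u_q-u_p=(\mathbf{e}_q-\mathbf{e}_p)^T\plm(\mathbf{e}_s-\mathbf{e}_p)=l_{qs}^+-l_{qp}^+-l_{ps}^+ +l_{pp}^+,
\]
again applying symmetry of $\plm$ to obtain the stated form. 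One can check symmetry and non-negativity properties of $r$ and $j_p(q,s)$ as sanity checks, since they follow automatically from positive-semidefiniteness of $\plm$ on $\mathbf{1}^\perp$.

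The main obstacle is calibrating the pseudo-inverse against the normalization implicit in the definitions of $r$ and $j_z$: namely, showing that the minimum-norm solution provided by $\plm$ computes the correct potential differences. This amounts to verifying that the condition $\plm\mathbf{1}=0$ is irrelevant because all quantities of interest are differences of nodal potentials, together with the observation that the input currents lie in $\mathbf{1}^\perp=\mathrm{row}(\lm)$. Once this is in place, the two stated identities follow by expanding the quadratic and bilinear forms above.
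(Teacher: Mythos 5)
Your argument is correct: for a connected graph $\lm\plm\mathbf{b}=\mathbf{b}$ whenever $\mathbf{b}\perp\mathbf{1}$, so the quadratic form $(\mathbf{e}_q-\mathbf{e}_p)^T\plm(\mathbf{e}_q-\mathbf{e}_p)$ gives $r(p,q)$ and the bilinear form $(\mathbf{e}_q-\mathbf{e}_p)^T\plm(\mathbf{e}_s-\mathbf{e}_p)$ gives $j_p(q,s)$, matching the stated identities (and consistent with $2j_p(q,s)=r(p,q)+r(p,s)-r(q,s)$). The paper itself offers no proof but cites \cite[Lemmas 3.4 and 3.5]{C2}, where the same pseudo-inverse circuit argument is used, so your route is essentially the standard one; the only point worth making explicit is that $\lm$ is formed with respect to an adequate vertex set, and effective resistances and voltages between vertices are unchanged by inserting such valence-two vertices.
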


Suppose that $J_e$ denote an $e \times e$ matrix with each entry is equal to $1$.

\textbf{Example I:}

Let $\ga$ be the circle graph with three vertices as illustrated in \figref{fig circlegraph}. The total length of $\ga$ is $\elg=a+b+c$.
We have $\tg=\frac{\elg}{12}$, and the following discrete Laplacian matrix $\lm$, its pseudo inverse $\plm$, the resistance matrix $\rmm$ and the value matrix $\zm$ with respect to the ordered end points of edges $\{ (v_1, v_2), (v_1, v_3), (v_2, v_3) \}$:
\begin{figure}
\centering
\includegraphics[scale=0.75]{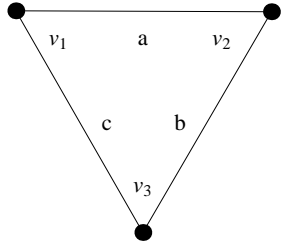} \caption{A circle graph with vertices $\{ v_1, \, v_2, \, v_3 \}$,
and edge lengths $\{ a, \, b, \, c \}$.} \label{fig circlegraph}
\end{figure}
\tiny
\[
\lm=\left[
\begin{array}{ccc}
 1/a+1/b & -1/a & -1/b \\
 -1/a & 1/a+1/c & -1/c \\
 -1/b & -1/c & 1/b+1/c \\
\end{array}
\right],
\qquad
\rmm=\frac{1}{\elg}
\left[
\begin{array}{ccc}
 0 & a b+a c & a b+b c \\
 a b+a c & 0 & a c+b c \\
 a b+b c & a c+b c & 0 \\
\end{array}
\right],
\]
\[
\plm=\frac{1}{9 \elg}
\left[
\begin{array}{ccc}
 b c+a (4 b+c) & b c-2 a (b+c) & -2 b c+a (-2 b+c) \\
 b c-2 a (b+c) & b c+a (b+4 c) & a (b-2 c)-2 b c \\
 -2 b c+a (-2 b+c) & a (b-2 c)-2 b c & 4 b c+a (b+c) \\
\end{array}
\right],
\]
\[
\zm=\tg J_3 -\frac{1}{2 \elg}
\left[
\begin{array}{ccc}
 -(x-y)^2+(a+b+c) |x-y| & (a+b+c-x-y) (x+y) & (b+c+x-y) (a-x+y) \\
 (a+b+c-x-y) (x+y) & -(x-y)^2+(a+b+c) |x-y| & (b+c-x-y) (a+x+y) \\
 (b+c+x-y) (a-x+y) & (b+c-x-y) (a+x+y) & -(x-y)^2+(a+b+c) |x-y| \\
\end{array}
\right].
\]
\normalsize
Since $\ga$ has no bridge, each entry of $\zm$ is a quadratic function in both $x$ and $y$.

\textbf{Example II:}

Let $\ga$ be the tree graph as given in \figref{fig treegraph}. The list of the ordered end points of the edges is
$\{(v_1, v_3), (v_2, v_3 ), (v_3, v_4), (v_4, v_5), (v_4, v_6) \}$, and the list of the corresponding edge lengths in order is given by $\{ a, \, b, \, c, \, d, \, e \}$.
\begin{figure}
\centering
\includegraphics[scale=0.75]{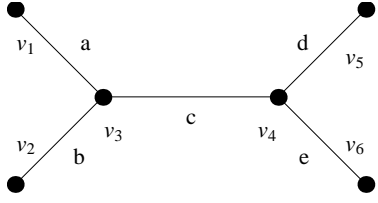} \caption{A tree graph with vertices $\{ v_1, \, v_2, \, v_3, \, v_4, \, v_5, \, v_6 \}$,
and edge lengths $\{ a, \, b, \, c, \, d, \, e \}$.} \label{fig treegraph}
\end{figure}
In this case, $\tg=\frac{1}{4}(a+b+c+d+e)$, and the Laplacian matrix $\lm$ and the resistance matrix $\rmm$ are given as follows:
\tiny
\[
\lm=
\left[
\begin{array}{cccccc}
 1/a &  & 0 & -1/a & 0 & 0 \\
 0 & 1/b & -1/b & 0 & 0 & 0\\
 -1/a & -1/b & 1/a+1/b+1/c & -1/c & 0 & 0\\
 0 & 0 & -1/c & 1/c+1/d+1/e & -1/d & -1/e\\
 0 & 0 & 0 & -1/d & 1/d & 0\\
 0 & 0 & 0 & -1/e & 0&1/e\\
\end{array}
\right],
\]
\[
\rmm=
\left[
\begin{array}{cccccc}
 0 & a+b & a & a+c & a+c+d & a+c+e \\
 a+b & 0 & b & b+c & b+c+d & b+c+e\\
 a & b & 0 & c & c+d & c+e\\
 a+c & b+c & c & 0 & d & e\\
 a+c+d & b+c+d & c+d & d & 0 & d+e\\
 a+c+e & b+c+e & c+e & e & d+e&0\\
\end{array}
\right],
\]
\[
\zm=\tg J_5 - \frac{1}{2}
\left[
\begin{array}{ccccc}
|x-y| & a-x+b-y & a-x+y & a-x+c+y & a-x+c+y\\
a-x+b-y & |x-y| & b-x+y & b-x+c+y & b-x+c+y\\
a-x+y & b-x+y & |x-y| & c-x+y & c-x+y\\
a-x+c+y & b-x+c+y & c-x+y & |x-y| & x+y\\
a-x+c+y& b-x+c+y &c-x+y & x+y & |x-y|
\end{array}
\right].
\]
\normalsize
We considers the following cases to clarify how we use the value matrix $\zm$.

If $x, \, y \in e_1$, $g_{\mu_{can}}(x,y)=\tg-\frac{1}{2}|x-y|$, where $0 \leq x \leq a$, $0 \leq y \leq a$ and $v_1$ corresponds to $0$.

If $x, \, y \in e_3$, $g_{\mu_{can}}(x,y)=\tg-\frac{1}{2}|x-y|$, where $0 \leq x \leq c$, $0 \leq y \leq c$ and $v_3$ corresponds to $0$.

If $x \in e_2$ and $y \in e_4$, $g_{\mu_{can}}(x,y)=\tg-\frac{1}{2}(b-x+c+y)$, where $0 \leq x \leq b$, $0 \leq y \leq d$ and both $v_2$ and $v_4$ correspond to $0$.

Note that each entry of $\zm$ is a linear function in both $x$ and $y$ because of the fact that $\ga$ is a tree, i.e., has no bridges.

\textbf{Example III:}

In this example, we consider the tetrahedral graph with edge lengths given as in \figref{fig agf2}.

\begin{figure}
\centering
\includegraphics[scale=0.5]{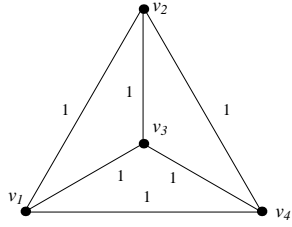} \caption{Tetrahedral graph with vertices $\{v_1, \, v_2, \, v_3, \, v_4 \}$.} \label{fig agf2}
\end{figure}

In this case, we have $\tg=\frac{5}{16}$. The discrete Laplacian matrix $\lm$, its pseudo inverse of $\plm$ and the resistance matrix $\rmm$ are as follows:
\tiny
\[
\lm =
\left[
\begin{array}{cccc}
 3 & -1 & -1 & -1 \\
 -1 & 3 & -1 & -1 \\
 -1 & -1 & 3 & -1 \\
 -1 & -1 & -1 & 3 \\
\end{array}
\right],
\qquad
\plm=
\frac{1}{48}
\left[
\begin{array}{cccc}
 19 & 7 & 7 & 7 \\
 7 & 19 & 7 & 7 \\
 7 & 7 & 19 & 7 \\
 7 & 7 & 7 & 19 \\
\end{array}
\right],
\qquad
\rmm=\frac{1}{2}
\left[
\begin{array}{cccc}
 0 & 1 & 1 & 1 \\
 1 & 0 & 1 & 1 \\
 1 & 1 & 0 & 1 \\
 1 & 1 & 1 & 0 \\
\end{array}
\right].
\]
\normalsize
If the ordered end points of edges is $\{(v_1, v_2), (v_1, v_3), (v_1, v_4), (v_2, v_3), (v_2, v_4), (v_3, v_4)\}$,
we have the value matrix $\zm=\frac{5}{16} J_6-\frac{1}{4}[C_1, \, C_2, \, C_3, \, C_4, \, C_5, \, C_6]$, where $C_i$ with $i \in \{1, \, 2, \, 3, \, 4, \, 5, \, 6  \}$ are columns as given below:
\tiny
\[
[C_1,\, C_2, \, C_3]=
\left[
\begin{array}{ccc}
 -x^2+2 x y-y^2+2 |x-y| & 2 x-x^2+2 y-x y-y^2 & 2 x-x^2+2 y-x y-y^2 \\
 2 x-x^2+2 y-x y-y^2 & -x^2+2 x y-y^2+2 |x-y| & 2 x-x^2+2 y-x y-y^2 \\
 2 x-x^2+2 y-x y-y^2 & 2 x-x^2+2 y-x y-y^2 & -x^2+2 x y-y^2+2 |x-y| \\
 1-x^2+y+x y-y^2 & 1+x-x^2+y-x y-y^2 & 1+x-x^2+y-y^2 \\
 1-x^2+y+x y-y^2 & 1+x-x^2+y-y^2 & 1+x-x^2+y-x y-y^2 \\
 1+x-x^2+y-y^2 & 1-x^2+y+x y-y^2 & 1+x-x^2+y-x y-y^2 \\
\end{array}
\right],
\]
\[
[C_4,\, C_5, \, C_6]=
\left[
\begin{array}{ccc}
 1-x^2+y+x y-y^2 & 1-x^2+y+x y-y^2 & 1+x-x^2+y-y^2 \\
 1+x-x^2+y-x y-y^2 & 1+x-x^2+y-y^2 & 1-x^2+y+x y-y^2 \\
 1+x-x^2+y-y^2 & 1+x-x^2+y-x y-y^2 & 1+x-x^2+y-x y-y^2 \\
 -x^2+2 x y-y^2+2 |x-y| & 2 x-x^2+2 y-x y-y^2 & 1-x^2+y+x y-y^2 \\
 2 x-x^2+2 y-x y-y^2 & -x^2+2 x y-y^2+2 |x-y| & 1+x-x^2+y-x y-y^2 \\
 1-x^2+y+x y-y^2 & 1+x-x^2+y-x y-y^2 & -x^2+2 x y-y^2+2 |x-y| \\
\end{array}
\right].
\]
\normalsize
Note that each entry of $\zm$ is a quadratic function in both $x$ and $y$ as expected, because $\ga$ has no bridge.



\textbf{Acknowledgements:} This work is supported by The Scientific and Technological Research Council of Turkey-TUBITAK (Project No: 110T686).

\end{document}